\newtheorem{theorem}{Theorem}[section]
\newtheorem{thm}[theorem]{Theorem}
\newtheorem{lem}[theorem]{Lemma}
\theoremstyle{definition}
\newtheorem{constr}[theorem]{Construction}
\theoremstyle{remark}
\newcommand{\mbb}{\mathbb}
\newcommand{\NN}{\mbb{N}}
\newcommand{\ZZ}{\mbb{Z}}
\newcommand{\CC}{\mbb{C}}
\newcommand{\PP}{\mbb{P}}
\newcommand{\mc}{\mathcal}
\newcommand{\mcX}{\mc{X}}
\newcommand{\OO}{\mc{O}}
\newcommand{\wht}{\widehat}
\newcommand{\whts}{\wht{s}}
\newcommand{\whtOO}{\wht{\OO}}
\newcommand{\SP}{\text{Spec }}
\newsavebox{\sembox}
\newlength{\semwidth}
\newlength{\boxwidth}
\newsavebox{\semrbox}
\newlength{\semrwidth}
\newlength{\boxrwidth}
\newcommand{\Semr}[1]{%
\sbox{\semrbox}{\ensuremath{#1}}%
\settowidth{\semrwidth}{\usebox{\semrbox}}%
\sbox{\semrbox}{\ensuremath{\left(\usebox{\semrbox}\right)}}%
\settowidth{\boxrwidth}{\usebox{\semrbox}}%
\addtolength{\boxrwidth}{-\semrwidth}%
\left(\hspace{-0.3\boxrwidth}%
\usebox{\semrbox}%
\hspace{-0.3\boxrwidth}\right)%
}
\title
{$R$-equivalence on del Pezzo surfaces of degree $4$ and cubic surfaces}
\author[Tian]{Zhiyu Tian}
\address{
Department of Mathematics 253-37\\
California Institute of Technology \\
Pasadena, CA, 91125}
\email{tian@caltech.edu}
\date{\today}
\begin{document}


\begin{abstract}
We prove that there is a unique $R$-equivalence class on every del Pezzo surface of degree $4$ defined over the Laurent field $K=k\Semr{t}$ in one variable over an algebraically closed field $k$ of characteristic not equal to $2$ or $5$. We also prove that given a smooth cubic surface defined over $\CC\Semr{t}$, if the induced morphism to the GIT compactification of smooth cubic surfaces lies in the stable locus (possibly after a base change), then there is a unique $R$-equivalence class.
\end{abstract}


\maketitle


\section{Introduction}

Given a variety $X$ over a field $K$, Manin \cite{ManinCubic} defined the $R$-equivalence relation on the set of rational points $X(K)$. Recall that two points $x, y$ are \emph{directly R-connected} if there is a $K$-morphism $f: \PP^1_K \to X$ such that $f(0)=x, f(\infty)=y$. Then the $R$-equivalence relation is the equivalence relation generated by such relations.

The $R$-equivalence relation on $X(K)$ is not so interesting unless $X$ already contains lots of rational curves (at least over an algebraic closure of $K$). Such $X$ are rationally connected. For a precise definition of rational connectedness, see \cite{Kollar96}.

Many people have studied the $R$-equivalence classes on cubic hypersurfaces \cite{MadoreCubic}, intersection of two quadrics \cite{CTSK}, \cite{CTSD}, and low degree complete intersections \cite{PirutkaRequiv}.

In this short note, we give a very simple proof of the following result.
\begin{thm}\label{thm:degree4}
Let $X$ be a smooth del Pezzo surface of degree $4$ defined over the Laurent field $K=k\Semr{t}$ in one variable over an algebraically closed field $k$ of characteristic not equal to $2$ or $5$. Then there is exactly one $R$-equivalence class on $X(K)$.
\end{thm}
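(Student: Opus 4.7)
The plan is to spread $X$ to a proper flat model $\pi\colon \mcX \to \SP R$ over $R = k\Sem{t}$ and reduce the problem to connecting specializations on the central fiber. If $\pi$ happens to be smooth, then two $K$-points $x,y \in X(K)$ extend (by properness) to $R$-sections specializing to $k$-points $x_0, y_0$ on the smooth del Pezzo $\mcX_0$ over the algebraically closed residue field $k$. Since $\mcX_0$ is $k$-rational, $x_0$ and $y_0$ lie on a common very free rational curve, and by Kollár's smoothing theorem for very free rational curves in smooth families, this curve deforms to a $\PP^1_K \hookrightarrow X$ through $x$ and $y$, giving direct $R$-equivalence.

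To reduce to the smooth case, I would perform a tamely ramified base change $R \to R' = k\Sem{t^{1/n}}$, which is allowed whenever $n$ is coprime to $\operatorname{char} k$, and then apply semistable reduction or birational modification to obtain a smooth model over $R'$. The primes $2$ and $5$ enter naturally: $2$ reflects the pair of quadrics defining $X \subset \PP^4$, while $5$ reflects the $\operatorname{Gal}(\overline{K}/K)$-action on the five singular members of the defining pencil, equivalently on the five $\overline{K}$-rational conic bundle structures on $X$. A value of $n$ dividing $10$ should trivialize the relevant actions; the first paragraph then yields direct $R$-equivalence over $K'$. To descend to $K$, one either takes Galois averages of the connecting rational curves, or, more cleanly, exhibits a $K$-rational conic bundle structure $X \to \PP^1_K$ arising from a $\operatorname{Gal}$-invariant element of the pencil, and combines $\operatorname{Br}(K) = 0$ (since $K$ is $C_1$ by Lang) with a $K$-rational multisection to make the $R$-triviality visible over $K$ itself.

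\textbf{Main obstacle.} The hardest step is the descent from $K'$ back to $K$: $R$-equivalence is not preserved under Galois base change in general, so one must either construct Galois-symmetric chains of rational curves explicitly or work over $K$ throughout by producing a suitable $K$-rational structure on $X$. The technical heart is a careful analysis of the possible singular degenerations in the semistable model, together with control of the $\operatorname{Gal}(\overline{K}/K)$-action on the associated lines, conic bundle structures, and singular quadrics.
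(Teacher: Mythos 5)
Your proposal rests on two steps that do not hold up. The first is the claim that a tame base change of degree dividing $10$ (followed by semistable reduction) produces a \emph{smooth} proper model over $k[[t^{1/n}]]$. This is false in general: a del Pezzo surface of degree $4$ over $K=k\Semr{t}$ has potentially good reduction only in essentially isotrivial situations. If the five singular members of the defining pencil of quadrics collide as $t\to 0$ (equivalently, the associated five points on $\PP^1$ degenerate), then no finite base change, tame or wild, yields a smooth model, because the limit of the moduli point is genuinely degenerate. Semistable reduction only gives you a singular central fiber, and then your ``specialize, connect by a very free curve, deform back'' argument requires substantial extra machinery (equivariant very free curves on the resolved central fiber); this is precisely why the companion result in the paper for cubic surfaces carries a stable-reduction hypothesis and invokes the $G$-equivariant techniques of \cite{WAIsotrivial}. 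So the reduction to the good-reduction case cannot be taken for granted, and with it the first paragraph of your sketch collapses for general $X$.

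The second and more serious issue is the descent from $K'$ back to $K$, which you correctly identify as the main obstacle but do not resolve. $R$-triviality does not descend along finite extensions in general; ``Galois averaging'' of connecting rational curves is not an available operation, and $\mathrm{Br}(K)=0$ together with an unspecified $K$-rational conic bundle structure does not amount to a proof. The paper's route avoids models entirely and is built around exactly this descent problem: one blows up a $K$-point not on a line to obtain a cubic surface $\tilde X$ with a line, projects from the line to get a conic bundle with five singular fibers, and uses the cyclic (tame) Galois group of $K$ to show that after a tower of \emph{quadratic} extensions the surface becomes rational (this is where $\mathrm{char}\,k\neq 2,5$ enters, via separability of the degree $2$, $5$, $10$ extensions splitting the cycles of lines); over the top field $R$-equivalence is trivial, and one then descends one quadratic extension at a time using a mechanism special to cubic surfaces: the rational map $\mathrm{Res}_{L/K}\tilde X_L \dashrightarrow \tilde X_K$ sending a conjugate pair of points to the third intersection point of the line through them, whose fiber over any $K$-point contains a geometrically rational surface, so that the $C_1$ property and largeness of $K$ lift $K$-points and transport $R$-triviality down. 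Your proposal contains neither this quadratic-descent mechanism nor a workable substitute, so as it stands the argument has a genuine gap at its central step.
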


For the $R$-equivalence classes on a del Pezzo surface of degree $4$, J.-L. Colliot-Th{\'e}l{\`e}ne and A.~N. Skorobogatov proved that there is only one $R$-equivalence class if the surface has a conic bundle structure with $4$ degenerate fibers \cite{CTSK} over a field of cohomological dimension $1$. Very recently, Colliot-Th{\'e}l{\`e}ne proved that there is only one $R$-equivalence class if the field is $C_1$ and characteristic $0$ \cite{CTdelPezzo4}.

The observation in this paper in the case of Laurent field one can use a simple geometric argument to prove the statement. In the same spirit and using the $G$-equivariant techniques developed in \cite{WAIsotrivial}, we prove the following.
\begin{thm}\label{thm:cubic}
Let $X$ be a smooth cubic surface defined over $\CC\Semr{t}$ and let $\wht{t}: \SP \CC\Semr{t} \to \overline{M}$ be the induced morphism to the GIT compactification of smooth cubic surfaces. If after a base change the morphism can be completed to a morphism $\wht{s}: \SP \CC[[s]] \to \overline{M}^{\text{s}}$ to the locus of stable cubic surfaces, then there is a unique $R$-equivalence class on the $\CC\Semr{t}$-points of $X$.
\end{thm}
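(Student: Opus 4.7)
The plan is to combine the geometric strategy used for Theorem \ref{thm:degree4} with the $G$-equivariant deformation techniques of \cite{WAIsotrivial} to descend across the finite base change. The key idea is that although $\wht{t}$ may not extend into the stable locus directly, after the base change we get a nice degeneration whose central fibre is geometrically tractable, and $R$-equivalence can be witnessed by $G$-equivariant rational curves that descend to $K$.

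\textbf{Step 1: Building an equivariant model.} After enlarging the base change if necessary, I would assume $L/K$ is Galois with cyclic group $G$, where $L = \CC\Semr{s}$ and $t = s^n$. Pulling back the universal family over (a suitable étale presentation of) $\overline{M}^{\text{s}}$ along $\wht{s}$ gives a flat projective family $\pi: \mcX \to \SP \CC[[s]]$ whose generic fibre is $X_L = X \otimes_K L$ and whose special fibre $X_0$ is a stable cubic surface, hence has finite automorphism group and at worst mild (nodal-type) singularities. The action of $G$ on $\SP \CC[[s]]$ lifts to a $G$-action on $\mcX$, and the $G$-quotient of $X_L$ recovers $X$.

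\textbf{Step 2: Extending sections.} A $K$-point $x \in X(K)$ pulls back to a $G$-invariant $L$-point of $X_L$, and by properness of $\pi$ it extends uniquely to a $G$-equivariant section $\sigma_x: \SP \CC[[s]] \to \mcX$. Its specialization $\overline{x} \in X_0$ is $G$-fixed. After a $G$-equivariant small resolution of $\mcX$ if needed, I may assume $\overline{x}$ lies in the smooth locus of the (possibly modified) central fibre, and that $\mcX$ is regular along $\sigma_x$.

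\textbf{Step 3: Equivariant chain on the central fibre.} A stable cubic surface is rational, and its smooth locus is rationally connected; in fact it is covered by free rational curves. Given two $G$-fixed smooth points $\overline{x}_1, \overline{x}_2 \in X_0$, I would construct a $G$-equivariant connected chain of free rational curves $C = \bigcup_i C_i \subset X_0^{\text{sm}}$ joining them, along the lines of \cite{WAIsotrivial}: first build a non-equivariant chain, then $G$-translate its components to obtain a $G$-stable reducible configuration meeting $\overline{x}_1, \overline{x}_2$, and finally glue/smooth the resulting nodes equivariantly.

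\textbf{Step 4: Equivariant smoothing and descent.} A $G$-equivariant chain of free rational curves in $X_0^{\text{sm}}$ through the sections deforms $G$-equivariantly to a family of $G$-equivariant rational curves in $\mcX$ over $\SP \CC[[s]]$, still passing through $\sigma_{x_1}$ and $\sigma_{x_2}$. Taking the $G$-quotient of the generic fibre produces a morphism $\PP^1_K \to X$ joining $x_1$ and $x_2$, witnessing their $R$-equivalence.

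\textbf{Main obstacle.} The crux lies in Step 3 together with the equivariant smoothing of Step 4: producing, on a possibly singular surface $X_0$ with a non-generic $G$-action, a $G$-equivariant chain of free rational curves through prescribed $G$-fixed smooth points, and then smoothing it without obstruction across the family. The $G$-fixed locus in $X_0$ may contain singular points, so one must first verify that every pair of $G$-fixed smooth points can be joined by $G$-invariant free rational curves contained in the smooth locus. This is precisely where the GIT-\emph{stability} hypothesis is essential: it bounds $\operatorname{Aut}(X_0)$ (and thus the possible $G$-actions) and keeps the singularities of $X_0$ mild enough for the equivariant bend-and-break, gluing, and smoothing of \cite{WAIsotrivial} to apply.
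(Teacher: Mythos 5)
Your Steps 1--2 run into the precise difficulty that the paper's proof is built around, and you dispose of it with the phrase ``after a $G$-equivariant small resolution of $\mcX$ if needed'': such an equivariant resolution does not exist in general. For the local model $xy+z^2=s^2$ with the Galois action $s\mapsto -s$, each node admits exactly two small (simultaneous) resolutions and the action permutes them, so the full cyclic group $G$ cannot be lifted to any simultaneous resolution. This is not a technical nuisance but the heart of the matter: the correct statement (via Artin's functor of simultaneous resolutions) is only that a subgroup $G_1\subset G$ of index a power of $2$ lifts, because the obstruction at each ordinary double point is the choice between two resolutions. The alternative reading of your Steps 3--4 --- building $G$-equivariant free chains inside $X_0^{\text{sm}}$ of the \emph{singular} stable surface and smoothing them in the family --- is not covered by the results you invoke: Theorem 1.4 of \cite{WAIsotrivial} produces equivariant very free curves on a \emph{smooth} projective rationally connected fiber, and you give no argument that two prescribed $G$-fixed points of a nodal cubic surface can be joined by equivariant very free curves avoiding the nodes, nor that the relative deformation argument goes through when the sections or curves approach the singularities. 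Your closing remark that stability is used to bound $\operatorname{Aut}(X_0)$ also misses the point: stability is used only to guarantee that the central fiber has ordinary double points, which is what makes Artin's simultaneous resolution available and makes the failure of equivariance a purely $2$-power phenomenon.

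Because you assume full $G$-equivariance can be arranged, your argument never needs the second essential ingredient, which your proposal omits entirely: descent through quadratic extensions. In the actual proof one only obtains, after resolving equivariantly for the odd-order subgroup $G_1$, a unique $R$-equivalence class on $X_1=X\otimes_{\CC\Semr{t}}\CC\Semr{t_1}$, where $\CC\Semr{t_1}$ is the fixed field of $G_1$ and $\CC\Semr{t}\subset\CC\Semr{t_1}$ is a tower of quadratic extensions; there the argument does proceed as in your Steps 2--4 (extend the two sections to the smooth model $\tilde{\mcX}$, connect their $G_1$-fixed specializations by a $G_1$-equivariant very free curve in the smooth central fiber, and deform it equivariantly using the relative Hom-scheme and the equivariant section lemma). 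One must then come back down from $\CC\Semr{t_1}$ to $\CC\Semr{t}$ using Lemma \ref{lem:reduction}, i.e.\ the Weil-restriction/third-intersection-point construction special to cubic surfaces, which transfers $R$-triviality across each quadratic step. Without either an honest equivariant resolution for all of $G$ (impossible in general) or this quadratic-descent step, your proof does not close.
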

The same statement should also holds in positive characteristic provided the characteristic is large enough (probably at least $7$) such that the degree of the base change needed is divisible in the field. The GIT compactification of smooth cubic surfaces is constructed by Mumford \cite{MumfordGIT} (p. 80). The stable cubic surfaces are cubic surfaces with at worst ordinary double points as singularities. The unique strictly semistable cubic surface is given by the equation $XYZ+W^3=0$, which has $3$ $A_2$ singularities.

\textbf{Acknowledgment:} After the first version of the paper (which only deals with del Pezzo surfaces of degree $4$) was submitted, the referee found a gap in the original argument. I would like to thank him for his careful reading and many suggestions. After that, Colliot-Th\'el\`ene found another gap in the argument. I would like to thank him for his interest and many helpful discussions.

\section{$R$-equivalence and quadratic field extension}

We recall the following construction in \cite{ManinCubic}, Section 15, Proposition 15.1, and \cite{KollarCubic}, Example 3.8, Exercise 3.11.
\begin{constr}\label{WeilTrick}
Let $L/K$ be a separable quadratic field extension and $\tilde{X}_K$ a smooth projective cubic surface over $K$. Denote by $\tilde{X}_L$ the base change of $\tilde{X}_K$ to the field $L$. Consider the Weil restriction of scalars $Res_{L/K} \tilde{X}_{L}$. There is a rational dominant map (\cite{KollarCubic} Ex. 3.11, \cite{ManinCubic}, Section 15, Proposition 15.1):
\[
Res_{L/K} \tilde{X}_{L} \dashrightarrow \tilde{X}_K.
\]
When base changed to an algebraic closure or the field $L$, this can be described as mapping a pair of points to the third intersection point of the cubic surface with the line spanned by the pair (if the line does not lie in the cubic surface). In particular, the fiber (after base-changed to an algebraic closure or the field $L$) over a point is an open subset of the blow-up of the cubic surface at the point, and thus, rational.
\end{constr}
In general, the map is not surjective on $K$-rational points. However, we do have the following observation.

\begin{lem}
Given any $K$-rational point $x$ in $\tilde{X}_K$, there is one geometrically irreducible component of the fiber containing an open subset of a smooth rational surface. 
\end{lem}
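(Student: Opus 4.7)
The plan is to base change the Weil restriction to $L$ so as to describe the fiber explicitly, single out a distinguished ``main'' component, and then invoke Galois descent to produce a component defined over $K$. Write $Y = \mathrm{Res}_{L/K}\tilde X_L$, so that $Y_L = \tilde X_L \times_L \tilde X_L$, with $\mathrm{Gal}(L/K) = \langle \sigma \rangle$ acting by $\sigma(p,q) = (\sigma q, \sigma p)$. The rational map of Construction~\ref{WeilTrick} base changes to $\pi_L\colon (p,q)\mapsto$ third intersection of $\overline{pq}$ with $\tilde X_L$, and the fiber $F_L$ over $x_L$ is the subscheme whose geometric points are the pairs $(p,q)$ with $p,q,x$ collinear on $\tilde X_L$.

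For the main component $F_L^{\mathrm{main}}$ I take the (scheme-theoretic) closure of the open locus where $p,q,x$ are pairwise distinct. First-coordinate projection $(p,q)\mapsto p$ defines a birational map $F_L^{\mathrm{main}} \dashrightarrow \tilde X_L$ whose inverse is $p\mapsto (p, q_x(p))$, where $q_x(p)$ is the third intersection of $\overline{px}$ with $\tilde X_L$. Hence $F_L^{\mathrm{main}}$ is geometrically irreducible and birational to $\tilde X_L$, so its smooth locus is an open subset of a smooth rational surface (birationally, the blow-up of $\tilde X_L$ at $x_L$).

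For the descent, note that ``$p,q,x$ pairwise distinct and collinear on $\tilde X$'' is preserved by $\sigma$: if $(p,q)$ satisfies it then $(\sigma q, \sigma p)$, together with $\sigma x = x$, is again a pairwise distinct collinear triple on $\tilde X$. Thus $F_L^{\mathrm{main}}\subset Y_L$ is $\sigma$-stable, and effective Galois descent produces a closed $K$-subscheme $F^{\mathrm{main}}\subset Y$ whose base change to $L$ is $F_L^{\mathrm{main}}$. This $F^{\mathrm{main}}$ is the required geometrically irreducible component of the fiber, and its smooth locus is the required open subset of a smooth rational surface.

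The main obstacle is managing the other components that $F_L$ can acquire at special $x$---for instance, if $x$ lies on one of the $27$ lines $\ell\subset \tilde X$, then $\ell\times \ell$ sits inside $F_L$ as an extra two-dimensional component, and loci such as $\{p=x\}$ and $\{q=x\}$ may contribute further embedded or lower-dimensional components. One must verify that the closure of the distinct-collinear-triple locus picks out a single geometric component uniformly in $x$, and that under the twisted Galois action of the Weil restriction this component is stabilised rather than swapped with another.
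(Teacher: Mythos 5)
Your overall strategy is the paper's: identify the geometric fiber, show via first-coordinate projection that the relevant piece is birational to $\tilde X$ (essentially the blow-up of $\tilde X$ at $x$), hence a geometrically irreducible rational surface, and conclude that this component is defined over $K$. However, as written there is a genuine gap, and you have flagged it yourself without closing it: your $F_L^{\mathrm{main}}$ is defined as the closure of the locus of pairwise distinct collinear triples $(p,q,x)$, and when $x$ lies on one of the $27$ lines $\ell\subset\tilde X$ this locus is \emph{not} irreducible --- it contains, besides the graph-type piece $\{(p,q_x(p))\}$, the two-dimensional piece $(\ell\times\ell)$ minus diagonals, which is not in the closure of the graph piece. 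So for such $x$ the assertion ``$F_L^{\mathrm{main}}$ is geometrically irreducible and birational to $\tilde X_L$'' is false for the set you actually defined, the inverse $p\mapsto q_x(p)$ is not even defined for $p\in\ell$, and the question you leave open (which component is picked out, and whether the Galois action could swap components) is exactly the unresolved point.

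The fix is small and is precisely what the paper does: impose in the definition of the distinguished locus that the line spanned by $p,q,x$ is \emph{not} contained in $\tilde X$, i.e.\ work inside the domain of definition of the rational map (the paper's open set $W$ in the graph of triples). On that locus the first projection is injective, with image the complement in $\tilde X$ of the lines through $x$ (and an exceptional $\PP^1$ of tangent directions when $x$ is not on a line), so the fiber there is irreducible --- the blow-up of $\tilde X$ at $x$ when $x$ is on no line, and the complement of the lines through $x$ otherwise. This also makes your descent paragraph unnecessary: the fiber of the $K$-rational map over the $K$-point $x$, taken over the domain of definition, is already a $K$-scheme, and geometric irreducibility (checked after base change, as above) is all that is needed; there is no choice of component to stabilize. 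If you prefer to keep your descent formulation, the corrected locus is still $\sigma$-stable since the semilinear action preserves both collinearity with $x$ and the condition that the spanned line lies in $\tilde X$, so its closure descends, and irreducibility removes the swapping issue.
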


\begin{proof}
This follows from the geometry of the map as discussed below.

Since we are only interested in the geometry, it suffices to consider the base change of the rational map $Res_{L/K} \tilde{X}_{L} \dashrightarrow \tilde{X}_K$ over a separably closed field. Then the rational map is the same as the rational map:
\[
\tilde{X} \times \tilde{X} \dashrightarrow \tilde{X}
\]
which sends two points $x, y$ in $\tilde{X}$ to the third intersection point of the line spanned by them provided that the line is not contained in $\tilde{X}$. Consider the graph $\Gamma \subset \tilde{X} \times \tilde{X} \times \tilde{X}$ of the above rational map.

There is a open subvariety $W \subset \Gamma$ parameterizing all the triples $(x, y, z)$ such that the line spanned by the three points does not lie in the cubic surface $X$.

Let $p_i$ be the projection of $W$ to the $i$-th factor.  Then
\begin{itemize}
\item For a point $z \in X$ which is not contained in a line, the fiber of the morphism $p_3: W \to X$ over $z$ is isomorphic to the blow-up of the surface $X$ at $z$.
\item For a point $z \in X$ which is contained in a line, the fiber of the morphism $p_3$ over $z$ is isomorphic to the complement of the lines containing $z$
\end{itemize}
Moreover, the fiber dominates the first (or the second) factor via the projection.

To see this, one can consider the projection to the first factor. Denote by $F$ the fiber of $W$ over $z$. Then in the first case the morphism $p_1: F \to \tilde{X}$ is birational and is an isomorphism over the complement of the point $z$. The preimage of each point $x \neq z$, the preimage in $F$ is a unique point $(x, y, z)$. The fiber of $F \to \tilde{X}$ over $z$ are points of the form $(z, y, z)$. They corresponds to tangent lines of $\tilde{X}$ at $z$. So the fiber over $z$ is a $\PP^1$. In the second case, the morphism is birational and is an isomorphism over the complement of the lines containing $z$. For each point $x$, not contained in the lines that contain the point $z$, there is a unique point $(x, y, z)$ in $F$ which is mapped to $x$.

The two claims together give the geometrically integral rational surface over a point . 
\end{proof}

Assume $K$ is $C_1$. Then there is at least one rational point in a birational modification of this irreducible component of the fiber(\cite{CTC1}, Proposition 2). Thus given any $K$-rational point $x$ in $\tilde{X}_K$, there is a $K$-rational point of $Res_{L/K}\tilde{X}_L$ which is mapped to $x$. This allows us to prove the following.

\begin{lem}\label{lem:reduction}
Let $K$ be a $C_1$-field of characteristic $0$ or the Laurent field $k\Semr{t}$ over an algebraically closed field $k$. And let $\tilde{X}_K$ be a smooth cubic surface defined over $K$ and $L/K$ a separable degree $2$ field extension. Finally let $\tilde{X}_L$ be the base change of $\tilde{X}_K$ to $L$. If there is a unique $R$-equivalence class on $\tilde{X}_L(L)$, then there is a unique $R$-equivalence class on $\tilde{X}_K(K)$.
\end{lem}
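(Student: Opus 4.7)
The plan is to transfer $R$-equivalence from $\tilde{X}_L(L)$ down to $\tilde{X}_K(K)$ by pushing chains forward along the Weil-restriction map $\phi \colon Res_{L/K} \tilde{X}_L \dashrightarrow \tilde{X}_K$ of Construction \ref{WeilTrick}. The universal property of Weil restriction gives $Res_{L/K} \tilde{X}_L(K) = \tilde{X}_L(L)$ and, more generally, a natural bijection between $K$-morphisms $\PP^1_K \to Res_{L/K} \tilde{X}_L$ and $L$-morphisms $\PP^1_L \to \tilde{X}_L$ preserving the distinguished points $0$ and $\infty$. Consequently the two induced $R$-equivalence relations coincide, so the hypothesis translates into the statement that all $K$-rational points of $Res_{L/K} \tilde{X}_L$ are $R$-equivalent.

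I first verify that $\phi$ is surjective on $K$-points. For any $x \in \tilde{X}_K(K)$, the preceding lemma exhibits a geometrically integral rational component of $\phi^{-1}(x)$, and since $K$ is $C_1$, \cite{CTC1} Proposition 2 provides a $K$-rational point on it, giving a lift $\tilde{x} \in Res_{L/K} \tilde{X}_L(K)$ with $\phi(\tilde{x}) = x$. Now fix $x, y \in \tilde{X}_K(K)$ with lifts $\tilde{x}, \tilde{y}$ chosen this way. By the first paragraph they are joined by a chain $\tilde{x} = \tilde{z}_0, \tilde{z}_1, \ldots, \tilde{z}_n = \tilde{y}$ in which each consecutive pair is directly $R$-connected by a morphism $f_i \colon \PP^1_K \to Res_{L/K} \tilde{X}_L$.

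Each composition $\phi \circ f_i$ is a rational map $\PP^1_K \dashrightarrow \tilde{X}_K$ that extends to a $K$-morphism $g_i \colon \PP^1_K \to \tilde{X}_K$, because $\PP^1_K$ is a smooth curve and $\tilde{X}_K$ is proper. Provided that each $\tilde{z}_i$ lies in the domain of definition of $\phi$, the extensions satisfy $g_i(0) = \phi(\tilde{z}_{i-1})$ and $g_i(\infty) = \phi(\tilde{z}_i)$, and the $g_i$ assemble into an $R$-equivalence chain from $x$ to $y$ in $\tilde{X}_K(K)$, which is what is required.

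The main technical obstacle is arranging that every intermediate $\tilde{z}_i$ lies in the domain of $\phi$, whose complement has codimension at least two in the smooth variety $Res_{L/K} \tilde{X}_L$. The natural remedy is to perturb each offending intermediate node by an $R$-equivalence path inside the fiber of $\phi$ over its image in $\tilde{X}_K$; such a fiber is, by the preceding lemma, an open subset of a rational surface over a $C_1$ field and so carries many $K$-points in $\operatorname{dom}(\phi)$, which allows one to replace the original chain by one whose intermediate vertices all avoid the indeterminacy locus. This cleanup step, rather than the core push-forward construction, is the technical heart of the argument.
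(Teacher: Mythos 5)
There is a genuine gap, and it sits exactly where you flag it: the ``cleanup step'' for intermediate nodes of the chain is not an afterthought but the whole difficulty, and your proposed remedy does not work as stated. If an intermediate node $\tilde{z}_i$ lies in the indeterminacy locus of $\phi$, then ``the fiber of $\phi$ over its image'' is meaningless, since $\tilde{z}_i$ has no image under $\phi$; and even granting some nearby fiber, replacing $\tilde{z}_i$ by another point of that fiber does not repair the chain, because the two curves $f_i, f_{i+1}$ still terminate at the old node, so one must move the \emph{curves}, not the vertices, and over a general $C_1$ field you have no deformation/general-position argument producing new $K$-curves avoiding a codimension-two locus. The paper circumvents this issue entirely, in two different ways according to the hypothesis on $K$: over the Laurent field (more generally a large field) it invokes Koll\'ar's result \cite{KollarSpecialization} that $R$-equivalence coincides with \emph{direct} $R$-equivalence, so the lifted points are joined by a single rational curve and there are no intermediate nodes at all; over a characteristic-$0$ $C_1$ field it resolves the indeterminacy $\overline{Res} \to Res_{L/K}\tilde{X}_L$ and uses the birational invariance of the set of $R$-equivalence classes (\cite{CTS}, Proposition 10) to keep a single class upstairs, after which one pushes chains forward along an honest morphism $\overline{Res}\to\tilde{X}_K$. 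Without one of these two inputs (or a genuine substitute), your push-forward of an arbitrary chain along the rational map $\phi$ does not go through.

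A secondary gap: your surjectivity step asserts that $C_1$-ness alone, via \cite{CTC1} Proposition 2, yields a $K$-point on the geometrically rational fiber component, lying moreover in $\operatorname{dom}(\phi)$. That proposition produces a rational point on a smooth projective model of the component; it need not lie in the open subset you need. The paper handles this by using that the Laurent field is large (so $K$-points are Zariski dense once one exists, and can be chosen in the locus where $\phi$ is defined), and in the characteristic-$0$ case by working on $\overline{Res}$, where the fiber over $x$ is proper and any $K$-point of it maps to $x$, so no open condition has to be met.
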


\begin{proof}
The previous construction \ref{WeilTrick} shows that $\tilde{X}_{K}$ is dominated by a variety (i.e. $Res_{L/K}\tilde{X}_L$) whose rational points are $R$-connected. Moreover, the fiber over any rational point in $\tilde{X}_{K}(K)$ is an open subset of a rational surface (which is isomorphic to the blow-up of the cubic surface at the rational point over the field $L$).

Over a Laurent field (or more generally over any large fields), being $R$-connected is the same as being directly $R$-connected \cite{KollarSpecialization}. Thus any two points in $\tilde{X}_{L}$, hence also any two points in $Res_{L/K}\tilde{X}_L$, are directly $R$-connected by a single rational curve. Since the fiber in $Res_{L/K}\tilde{X}_L$ over any rational point in $\tilde{X}_K$ contains a geometrically irreducible component which has an open subset isomorphic to an open subset of a smooth (geometrically) rational surface, and the field $K$ is a large field and $C_1$, the set of $K$-rational points in the open subset of $Res_{L/K}\tilde{X}_L$ where the rational map to $\tilde{X}_K$ is defined maps surjectively to the set of rational points $\tilde{X}_K(K)$. It follows that any two $K$-rational points in $\tilde{X}_K(K)$ are directly $R$-connected.

We use the following argument in the case of characteristic $0$ $C_1$-field. We first resolve the indeterminacy by $\overline{Res} \to Res_{L/K}\tilde{X}_L$. Since the set of rational points modulo $R$-equivalence is a birational invariant in characteristic $0$ (Proposition 10, \cite{CTS} p.195), there is only one $R$-equivalence class on $\overline{Res}(K)$. By previous discussion, there is a geometrically irreducible component of the fiber over any rational point on $\tilde{X}_{K}$, which is geometrically rational. A resolution of singularities of this irreducible component is a smooth projective (geometrically) rational surface over a $C_1$-field. In particular, there is a $K$-rational point on the resolution (\cite{CTC1}, Proposition 2), which is mapped to a $K$-rational point of the irreducible component. Thus the set of rational points of $\overline{Res}(K)$ maps surjectively to $\tilde{X}_K(K)$. Then one can deduce $R$-equivalence of any two $K$-rational points in $\tilde{X}_{K}$ by lifting them to $\overline{Res}(K)$.
\end{proof}
\section{Proof of Theorem \ref{thm:degree4}}

To begin the proof, first notice the following fact.

\begin{lem}\label{lem:Laurent}
Let $\tilde{X}$ be a smooth cubic surface with a line defined over a the Laurent field $K=k\Semr{t}$ over an algebraically closed field $k$ of characteristic not equal to $2$ or $5$. Then there is a sequence of degree $2$ Galois field extensions $K=K_0 \subset K_1 \subset \ldots \subset K_n$ such that the base change $\tilde{X}_n$ is rational over $K_n$.
\end{lem}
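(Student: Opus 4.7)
The plan is to produce, after a tower of quadratic extensions $K=K_0 \subset K_1 \subset \cdots \subset K_n$, a second $K_n$-rational line $L'$ on $\tilde X$ skew to $L$. Two skew $K_n$-rational lines on a smooth cubic determine a birational map $\tilde X_n \dashrightarrow L \times L' \cong \PP^1 \times \PP^1$, sending a general point $p$ to the pair of intersections of $L$ and $L'$ with the unique transversal line through $p$; this yields the desired $K_n$-rationality.

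Projection from $L$ exhibits $\tilde X$ as a conic bundle $\pi : \tilde X \to \PP^1_K$ with five degenerate fibers, each a pair of lines meeting on $L$. Among the $27$ lines on $\tilde X_{\bar K}$, the line $L$ itself and the ten components of degenerate fibers meet $L$, while the remaining sixteen lines are skew to $L$; the intersection calculation $L' \cdot F = L' \cdot (-K - L) = 1$ shows each of these is a section of $\pi$. So the problem reduces to finding a $K_n$-rational line among these sixteen sections.

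The absolute Galois group $G_K$ acts on the $27$ lines through the Weyl group $W(E_6)$, and since $L$ is $K$-rational, through the stabilizer of $L$, which is $W(D_5) \cong (\ZZ/2)^4 \rtimes \mathrm{Sym}_5$ of order $2^7 \cdot 3 \cdot 5$. Let $\rho : G_K \to W(D_5)$ denote this representation, with image $H$, and take $K_n$ to be the fixed field of $\rho^{-1}(O^2(H))$, where $O^2(H) \triangleleft H$ is the smallest normal subgroup with $2$-group quotient. Then $\mathrm{Gal}(K_n/K) = H/O^2(H)$ is a finite $2$-group, so via a composition series it decomposes into a tower of quadratic extensions, and over $K_n$ the Galois image on lines is exactly $O^2(H)$.

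The key group-theoretic input is that odd-order elements of $W(D_5)$ project non-trivially to $\mathrm{Sym}_5$ (since $(\ZZ/2)^4$ is a $2$-group), and a $3$-cycle and a $5$-cycle in $\mathrm{Sym}_5$ cannot commute; consequently $W(D_5)$ has no element of order $15$, and the image of $O^2(H)$ in $\mathrm{Sym}_5$ is one of $\{e\}, C_3, C_5, A_4, A_5$. For prime-order $C_p$ with $p \in \{3,5\}$, the orbit-counting identity $|X^{C_p}| \equiv |X| = 16 \pmod p$ gives at least one fixed line; for the $A_4$ and $A_5$ cases, parameterizing the sixteen skew lines on a blow-up model of $\tilde X$ as the $\binom{5}{0} + \binom{5}{2} + \binom{5}{4} = 16$ even-parity subsets of $\{1,\ldots,5\}$ shows that the empty subset gives a $\mathrm{Sym}_5$-invariant class, corresponding to the canonical line $Q_6 = 2H - E_1 - \cdots - E_5$. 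I expect the main obstacle to be controlling the $(\ZZ/2)^4$ twist that $O^2(H)$ may carry inside the semidirect product, and verifying that in every conjugacy class of subgroups of $W(D_5)$ with the admissible projection pattern a fixed line still survives; this requires a more careful cohomological or enumerative check. The hypothesis $\mathrm{char}\, k \neq 2, 5$ is used for separability of the quadratic extensions and tameness of the at-most-cyclic-order-$5$ Galois actions coming from tame quotients of $G_K = \mathrm{Gal}(\bar K / k\Semr{t})$.
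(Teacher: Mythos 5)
Your reduction is fine: two skew rational lines do give a birational map to $\PP^1\times\PP^1$, the $16$ lines skew to $L$ are exactly the sections of the conic bundle, and the tower $K\subset K_n$ cut out by $\rho^{-1}(O^2(H))$ is indeed a tower of quadratic extensions. But the final group-theoretic step, which you yourself flag as unresolved, is a genuine gap and not a technicality: in the generality in which you set it up (arbitrary image $H\leq W(D_5)$), the desired conclusion is \emph{false}. The normal subgroup $(\ZZ/2)^4\subset W(D_5)$ acts simply transitively on the $16$ lines skew to $L$ (symmetric difference on the even subsets of $\{1,\dots,5\}$ in your parameterization), so any subgroup $M$ with $O^2(M)=M$ that meets $(\ZZ/2)^4$ nontrivially has no fixed line. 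Such $M$ exist: take $V\subset(\ZZ/2)^4$ a $2$-dimensional submodule on which a $3$-cycle $\sigma$ acts without nonzero fixed vectors; then $M=V\rtimes\langle\sigma\rangle\cong A_4$ is generated by its order-$3$ elements, its image in $\mathrm{Sym}_5$ is the ``admissible'' $C_3$, yet it fixes none of the $16$ lines --- so even your congruence argument for the $C_3$ and $C_5$ cases is invalid as stated, because it is applied to the image in $\mathrm{Sym}_5$ rather than to the group actually acting. Larger examples such as $(\ZZ/2)^4\rtimes A_5$ (which is generated by odd-order elements, since the even-weight module is irreducible) behave the same way. So no amount of ``more careful cohomological or enumerative check'' will close the argument for arbitrary $H$.

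What is missing is precisely the arithmetic input about $K=k\Semr{t}$ that the paper exploits: its absolute Galois group is procyclic in characteristic $0$, and in positive characteristic has procyclic tame quotient with normal pro-$p$ wild part. Hence the image $H\subset W(D_5)$ is cyclic (or, in characteristic $3$, has a normal Sylow $3$-subgroup of order at most $3$ with cyclic quotient), and since $W(D_5)$ has no element of order $15$, the group $O^2(H)$ is cyclic of order $1$, $3$ or $5$. An odd-order cyclic subgroup of $(\ZZ/2)^4\rtimes\mathrm{Sym}_5$ is conjugate, by Schur--Zassenhaus (vanishing of $H^1$ of an odd-order group in a $2$-group), to an untwisted subgroup of the standard $\mathrm{Sym}_5$, which does fix one of the $16$ lines ($4$ fixed lines for $C_3$, the single line $2H-E_1-\cdots-E_5$ for $C_5$); with that input your argument closes, and the hypothesis $\mathrm{char}\,k\neq 2,5$ is what keeps the orders $2$ and $5$ tame. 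Note that this is a genuinely different route from the paper, which never invokes $W(D_5)$: there one projects from $L$ to get the conic bundle with five degenerate fibers, uses the explicit cyclic extensions $k\Semr{s}/k\Semr{t}$, $s^5=t$ or $s^{10}=t$, to control the Galois orbits of the ten lines in the degenerate fibers, and concludes rationality either by contracting five disjoint lines, one in each degenerate fiber, or by contracting two disjoint lines to reach a quintic del Pezzo surface with a rational point. As written, however, your proposal does not prove the lemma.
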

\begin{proof}
We use the following construction: projection from the line to get a conic bundle structure of $\tilde{X}$ over $\PP^1_K$ with five degenerate fibers.

First consider the case the five degenerate fibers form an irreducible cycle defined over $K$. There are two further possibilities according to the field of definitions of the $10$ lines in the degenerate fibers. These lines are defined either in a degree $5$ (separable) field extension $K'=k\Semr{s}, s^5=t$ or a degree $10$ (separable) field extension $K'=k\Semr{s}, s^{10}=t$. In the first case the $10$ lines form two irreducible cycles over $K$, each consisting of $5$ lines in distinct fibers. And we can contract one set of $5$ lines in distinct fibers of the conic fibration over $K$. Thus we get a rational surface over $K$. In the second case we can make the contraction after a (separable) degree $2$ field extension.

If there is a degree $2$ cycle of the singular fibers over $K$, we first make a degree $2$ field extension so that the two singular fibers are both defined over the field. Then after a possible degree $2$ field extension, we may assume that all the four lines are defined over the field. Then we can contract two disjoint lines and get a del Pezzo surface of degree $5$ with a rational point. Thus the surface is rational over the field.

For the case where there is just one singular fiber defined over $K$ and the other $4$ singular fibers are conjugate over $K$, we know there is a tower of two separable degree $2$ field extension $K \subset K_1 \subset K_2$ so that each of the singular fiber cycle is defined over $K_2$. Then up to making a further degree $2$ field extension, all the fibers are defined over the field. Then the base change is rational since we can contract $2$ disjoint lines and a del Pezzo surface of degree $5$ with rational points is rational. 
\end{proof}

Now we can finish the proof of the theorem.
\begin{proof}[Proof of Theorem \ref{thm:degree4}]
The Laurent field $K$ is a large field. In particular, the set of rational points is Zariski dense once we have a point \cite{Pop96}. In any case there is a general point $x$ not on a line. Blow up at the point $x$. Then we have a smooth cubic surface $\tilde{X}$ with a line corresponding to the exceptional divisor. It suffices to show that there is a unique $R$-equivalence class on $\tilde{X}$.

Projection from the line gives a conic bundle structure on $\tilde{X}$ with five degenerate fibers.

By Lemma \ref{lem:Laurent}, there is a sequence of separable degree $2$ field extensions $K=K_0 \subset K_1 \subset \ldots \subset K_n$ such that $\tilde{X}_n =\tilde{X}\times_K K_n$ is rational.

The $R$-equivalence on $\tilde{X}_n$ is trivial. And we use Lemma \ref{lem:reduction} to finish the proof.
\end{proof}

\section{Proof of Theorem \ref{thm:cubic}}
By assumption, after a degree $l$ base change $t=s^l$, there is a projective family $\mcX \to \SP \CC[[s]]$ such that
\begin{enumerate}
\item The generic fiber of the family is isomorphic to the base change of $X$ to $\SP \CC\Semr{s}$.
\item The central fiber $\mcX_0$ is a cubic surface with at worst ordinary double points as singularities.
\item The Galois group $G=\ZZ/l \ZZ$ acts on $\mcX$ and the projection to $\SP \CC[[s]]$ is $G$-equivariant.
\end{enumerate}

Any $\CC\Semr{t}$-points of $X$ induces a $G$-equivariant section of the family.

For any family of surfaces with at worst du Val singularities over $\SP \CC[[s]]$, there is a simultaneous resolution after a base change (
\cite{ArtinSimultaneous}), i.e. a diagram as the following:
\[
\begin{CD}
\tilde{\mcX}@>>>\mcX' @>>>\mcX\\
@VVV                  @ VVV			@VVV\\
\SP\CC[[s']]@=\SP\CC[[s']]@>>>\SP \CC[s]\\
\end{CD}
\] 
such that $\tilde{\mcX} \to \SP \CC[[s']]$ is a smooth proper family. In general $\tilde{\mcX}$ is only an algebraic space but in our case $\tilde{\mcX}$ is a scheme and there is a relatively amply line bundle over $\tilde{\mcX}$ (thus the family is still projective).

So we assume that there is already a simultaneous resolution $\tilde{\mcX} \to \mcX \to \SP \CC[[s]]$. However, the $G$-action in general does not lift to the simultaneous resolution.

As an example, consider the family $xy+z^2=s^2$, obtained from a degree $2$ base change of the family $xy+z^2=s$, with the natural $\ZZ/2\ZZ$ action $s \mapsto -s$. There are two simultaneous resolutions by blowing up $x=z-s=0$ or $x=z+s$=0. And the action does not extend to the resolution since it does not preserve the blow-up center. But for ordinary double points, one can lift the action of a subgroup $G_1$ whose index is a power of $2$, basically because there are exactly two different simultaneous resolutions for a ordinary double point (In the example, this is the trivial subgroup).  A precise formulation is to use Artin's result in \cite{ArtinSimultaneous}. 

To be more precise, given a family of surfaces $\mcX \to S$ over a base, consider the functor 
\[
\text{R}_{\mcX/S}(S')=\text{the set of simultaneous resolutions of}~\mcX \times_S S' ~\text{for}~S' \to S.
\]
Artin proved that this functor is represented by a quasi-separated algebraic space $R$ (here we used a notation different from the one used by Artin so that we do not confuse this with the Weil restriction introduced in previous sections). For the family
\[
xy+z^2=s^2
\]
over $\SP \CC[[s]]$, $R$ is obtained by gluing two copies of $\SP \CC[[s]]$ along the generic point via the identity map, thus is non-separated and the $\ZZ/2\ZZ$ action permutes the two closed points. 

Every family of surfaces whose generic fiber is smooth and whose central fiber has only ordinary double points as singularities has a simultaneous resolution without the need of base change if and only if locally around each singularity, it is given by
\[
xy+z^2=s^{2k}, k \in \NN.
\]
The family (locally) can be thought of as obtained from a degree $k$ base change from the $k=1$ case, and the space $R$ has a similar description as a base change.

By the universal property, the group $G$ acts on the space $R$. The action can be lifted to the resolution if and only if the closed points of the space $R$ are fixed by the action. As we have seen that this is not true in general. But by the above description, the action of a subgroup, whose index is an power of $2$, always lifts to the resolution. We may just choose the subgroup $G_1$ to be the maximal subgroup of odd order. This is clear if the ordinary double point is a fixed point of $G_1$. If it has a non-trivial orbit, then we choose one resolution for one point in the orbit and use the group action to determine resolution at all the other orbits.

The subgroup $G_1$ determines a subfield extension $\CC\Semr{t}\subset \CC\Semr{t_1}\subset \CC\Semr{s}$, where $\CC\Semr{t}\subset \CC\Semr{t_1}$ a sequence of quadratic field extensions and the Galois group of $\CC\Semr{s}/\CC\Semr{t_1}$ is isomorphic to $G_1$.

Let $X_1$ be the base change of $X$ to the field $\CC\Semr{t_1}$. 

The group $G_1$ acts on both $\tilde{\mcX}$ and $\mcX$, and the projections to $\SP \CC[[s]]$ are $G_1$-equivariant. Thus any two $\CC\Semr{t_1}$-rational points of $X_1$ induce two $G_1$-equivariant sections $\whts_1$ and $\whts_2$ of the family $\tilde{\mcX}\to \SP \CC[[s]]$, whose intersection points with the central fiber $\tilde{\mcX}_0$ are fixed points of the action of $G_1$ on $\tilde{\mcX}_0$. Denote the two points by $x$ and $y$.

By Theorem 1.4 in \cite{WAIsotrivial}, there is a $G_1$-equivariant very free curve $f: \PP^1 \to \tilde{\mcX}_0$ such that $f(0)=x, f(\infty)=y$, where the $G_1$ action on $\PP^1$ is $z \mapsto \zeta z$, and $\zeta$ is a primitive root of unity. Consider the relative Hom-scheme
\[
\text{Hom}(\PP^1 \times \SP \CC[[s]], \tilde{\mcX}, 0 \times \SP \CC[[s]] \mapsto \whts_1, \infty \times \SP \CC[[s]] \mapsto \whts_1).
\]
Here we choose the same $G_1$-action on $\PP^1$ as above. Then there is a natural $G_1$-action on the relative Hom-scheme such that the projection to the base is $G_1$-equivariant. The very free curve $f$ gives a $G_1$-fixed point of the relative Hom-scheme and the morphism to the base is smooth at this point. Then by Lemma \ref{lem:equiv-lifting} below, there is a $G_1$-equivariant section of the relative Hom-scheme, which gives a family of rational curves connecting the two $G_1$-equivariant sections. Then the two $\CC\Semr{t_1}$-rational points are $R$-equivalent. 

So after a number of quadratic field extension $\CC\Semr{t_1}/\CC\Semr{t}$, there is a unique $R$-equivalence class on the base change $X_1$ of $X$. Then the theorem follows from Lemma \ref{lem:reduction}.

\begin{lem}[\cite{WAIsotrivial}]\label{lem:equiv-lifting}
Let $X$ and $Y$ be two $\CC$-schemes with a $G$-action and $f: X \rightarrow Y$ be a finite type $G$-equivariant morphism. Let $x \in X$ be a fixed point, and $y=f(x)$ (hence also a fixed point). Assume that $f$ is smooth at $x$. Then there exists a $G$-equivariant section $s: \SP \whtOO_{y,Y}\rightarrow X$. 
\end{lem}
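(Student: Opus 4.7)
The plan is to work formally at the fixed points and construct the section order by order. Let $R = \wht{\OO}_{y,Y}$ with maximal ideal $\mfm$, and $A = \wht{\OO}_{x,X}$. The morphism $f$ induces a $G$-equivariant ring homomorphism $\phi \colon R \to A$, which is formally smooth because $f$ is smooth at $x$. A $G$-equivariant section $s \colon \SP R \to X$ corresponds to a $G$-equivariant continuous ring map $\sigma \colon A \to R$ with $\sigma \circ \phi = \mathrm{id}_R$, so it suffices to build $\sigma$ as the inverse limit of compatible $G$-equivariant ring homomorphisms $\sigma_k \colon A \to R/\mfm^k$ lifting $\phi$.

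For $k=1$, the unique ring map $A \to R/\mfm = \CC$ (sending the maximal ideal of $A$ to $0$) is $G$-equivariant since $x$ is a fixed point. For the inductive step, assume a $G$-equivariant lift $\sigma_k$ has been constructed. By formal smoothness of $\phi$, the set $L$ of ring homomorphisms $\tilde{\sigma} \colon A \to R/\mfm^{k+1}$ whose reduction modulo $\mfm^k$ equals $\sigma_k$ is nonempty, and it is a torsor under the $\CC$-vector space $D = \mathrm{Der}_R(A, \mfm^k/\mfm^{k+1})$: the difference of any two such lifts lands in $\mfm^k/\mfm^{k+1}$ and a routine calculation verifies the Leibniz rule. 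The group $G$ acts on $L$ by $(g \ast \tilde{\sigma})(a) = g \cdot \tilde{\sigma}(g^{-1} \cdot a)$ and compatibly on $D$, and a $G$-fixed point of $L$ is exactly a $G$-equivariant lift $\sigma_{k+1}$.

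Fix any $\tilde{\sigma} \in L$ and set $c(g) = (g \ast \tilde{\sigma}) - \tilde{\sigma} \in D$; a direct check using the compatibility of the $G$-actions on $L$ and $D$ shows that $c$ satisfies the $1$-cocycle identity $c(gh) = c(g) + g \cdot c(h)$. Since $G$ is finite (as the Galois group of the relevant quadratic tower) and $D$ is a $\CC$-vector space, the cohomology $H^1(G, D)$ vanishes by Maschke's theorem; explicitly, $d := -|G|^{-1} \sum_{g \in G} c(g)$ satisfies $c(g) = g \cdot d - d$, so $\sigma_{k+1} := \tilde{\sigma} + d$ is simultaneously a lift of $\sigma_k$ and $G$-equivariant. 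Passing to the inverse limit over $k$ produces the desired section. The only substantive step is the $H^1$ vanishing at each stage, which is precisely where the linear reductivity of $G$ in characteristic zero is used; the rest of the argument consists of routine verifications of the derivation and cocycle identities.
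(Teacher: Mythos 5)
Your argument is correct in substance and it is essentially the argument behind the result being quoted: the paper gives no proof of this lemma at all, deferring to Corollary 2.2 of \cite{WAIsotrivial}, where the section is produced in the same way --- equivariant infinitesimal lifting made possible by smoothness at the fixed point, with the obstruction to equivariance killed by averaging over the group, using that $|G|$ is invertible in characteristic zero. So your write-up is a faithful reconstruction rather than a genuinely different route, and all the main steps (formal smoothness of $\wht{\OO}_{y,Y}\to\wht{\OO}_{x,X}$, the torsor structure of the set of lifts under $\mathrm{Der}_R(A,\mfm^k/\mfm^{k+1})$, the cocycle identity, vanishing of $H^1$ of a finite group with vector-space coefficients, passage to the limit) are sound. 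Two small corrections. First, a sign slip at the end: with your cocycle $c(g)=g\ast\tilde\sigma-\tilde\sigma$ and your $d=-|G|^{-1}\sum_{g}c(g)$ (so that indeed $c(g)=g\cdot d-d$), the equivariant lift is $\tilde\sigma-d=\tilde\sigma+|G|^{-1}\sum_g c(g)$, not $\tilde\sigma+d$: since $g\ast(\tilde\sigma+e)=\tilde\sigma+c(g)+g\cdot e$, fixedness requires $c(g)=e-g\cdot e$. Second, the averaging step requires $G$ finite (or at least linearly reductive), which the statement does not say explicitly but which holds in the application; note, though, that the group to which the lemma is applied is $G_1$, the maximal odd-order subgroup of the cyclic Galois group, not the Galois group of the quadratic tower --- the quadratic extensions are disposed of separately via Lemma \ref{lem:reduction}. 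Finally, your base case $k=1$ tacitly uses that $x$ is a point with residue field equal to that of $y$ (both $\CC$ in the application), an assumption implicit in the statement as well.
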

For the proof see Corollary 2.2, \cite{WAIsotrivial}.

\end{document}